\newtheorem{thm}{Theorem}
\newtheorem{lem}{Lemma}[section]
\newtheorem{prop}[lem]{Proposition}
\newtheorem{cor}[thm]{Corollary}
\theoremstyle{definition}
\newtheorem*{prob*}{Problem}
\newtheorem{ex}{Example}
\newcommand{\fX}{\mathfrak{X}}
\newcommand{\mX}{\operatorname{m}_{\mathfrak{X}}}
\newcommand{\smX}{\operatorname{sm}_{\mathfrak{X}}}
\begin{document}

\title[On embedding theorems for $\fX$-subgroups]{On embedding theorems for $\fX$-subgroups}

\author{Wenbin Guo, Danila O. Revin, and Andrey V. Vasil'ev}

\address{Wenbin Guo
\newline\hphantom{iii} School of Science, Hainan University, Haikou, Hainan, 570228, P.R. China}
\email{wbguo@ustc.edu.cn}

\address{Danila O. Revin
\newline\hphantom{iii} Sobolev Institute of Mathematics, Novosibirsk 630090, Russia}
\email{revin@math.nsc.ru}

\address{Andrey V. Vasil'ev
\newline\hphantom{iii} Sobolev Institute of Mathematics, Novosibirsk 630090, Russia
\newline\hphantom{iii} School of Science, Hainan University, Haikou, Hainan, 570228, P.R. China}
\email{vasand@math.nsc.ru}

\thanks{The first and third author were supported by the Natural Science Foundation of China (No. 12171126).}

\begin{abstract}
Let $\fX$ be a class of finite groups closed under subgroups, homomorphic images, and extensions. We study the question which goes back to the lectures of H. Wielandt in 1963-64: For a given $\fX$-subgroup $K$ and maximal $\fX$-subgroup $H$, is it possible to see embeddability of $K$ in $H$ (up to conjugacy) by their projections onto the factors of a fixed subnormal series. On the one hand, we construct examples where $K$ has the same projections as some subgroup of $H$ but is not conjugate to any subgroup of~$H$. On the other hand, we prove that if $K$ normalizes the projections of a subgroup $H$, then $K$ is conjugate to a subgroup of $H$ even in the more general case when $H$ is a submaximal $\mathfrak{X}$-subgroup.
\\
\textbf{Keywords}: finite group, complete class, maximal $\fX$-subgroup, submaximal $\fX$-subgroup, embedding theorems.
\\
\textbf{Mathematics Subject Classification.} 20E28, 20D20, 20D35.
\end{abstract}

\maketitle

\section{Main concepts and results}\label{sec1}

Let $G$ be a finite group. We fix a series
\begin{equation*}
 \label{ser}
 G=G_0\geqslant G_1\geqslant \dots\geqslant G_n=1.\eqno{(*)}
\end{equation*}
As usual, we say that the series $(*)$ is {\em normal} or  {\em subnormal}, if, respectively, $G\trianglerighteqslant G_i$ or $G_{i-1}\trianglerighteqslant G_i$ for each $i=1,\dots, n$. Given a subgroup $H$ of $G$, put $$H_i=H\cap G_i\ \text{and}\ H^i= H_{i-1}G_i/G_i.$$
In particular, $G^i=G_{i-1}/G_i$ is $i$th section or $i$th factor of the series~$(*)$. The subgroup $H^i$ of $G^i$ is said to be {\em the projection of $H$ onto the factor~$G^i$}.

In general, two subgroups having the same projections on the factors of~$(*)$ need not even be isomorphic. The situation turns out to be essentially different if we consider not arbitrary subgroups, but maximal subgroups in some complete class. Recall that, according to Wielandt \cite{Wie4,Wie3}, a nonempty class of groups $\fX$ is called {\em complete} if it is closed under taking subgroups, homomorphic images, and extensions. A maximal $\fX$-subgroup $H$ of $G$ will also be called {\em $\fX$-maximal} and written $H\in\mX(G)$. Wielandt proved \cite[Theorem~14.1(3)]{Wie4} that \textit{if $H,K\in\mX(G)$ and the projections of these subgroups onto the sections of the normal series~$(*)$ coincide, then $H$ and $K$ are conjugate in~$G$}. In other words, a maximal $\fX$-subgroup is uniquely determined by its projections onto the factors of the normal series up to conjugacy.

A similar assertion for a subnormal series is also true. Moreover, it is true even in a stronger and more convenient (to prove it) form proposed by Wielandt in~\cite{Wie3}. Wielandt introduced the so-called submaximal $\fX$-subgroups, a generalization of maximal $\fX$-subgroups that allows one to use inductive arguments in passing to intersections with normal and subnormal subgroups (see Lemma~\ref{lem:inductive} in Section~\ref{sec2}). According to \cite{Wie3}, a subgroup $H$ of $G$ is called a \emph{submaximal $\fX$-subgroup} (also \emph{$\fX$-sub\-max\-simal}, $H\in\smX(G)$), if there exists an embedding of $G$ in some group $G^*$ such that $G$ is subnormal in $G^* $ and $H=G\cap H^*$ for some $H^*\in\mX(G^*)$.

Wielandt announced the assertion \cite[5.4(c)]{Wie3}: \textit{if $H,K\in\smX(G)$ and the projections of these subgroups onto sections of the subnormal series $(*)$ coincide, then $H$ and $K$ are conjugate in~$G$}. Its proof was first presented in \cite[Corollary~1]{RSV}. In the present paper, we somewhat strengthen this and hence the original assertion about maximal $\fX$-subgroups.

\begin{thm}\label{th:conj} Let $G$ be a finite group with a subnormal series $(*)$ and $H\in\smX(G)$. Then any subgroup $K$ of $G$ whose projections onto sections of the series $(*)$ coincide with those of~$H$ is conjugate to $H$ in $\langle H,K\rangle$ and, in particular, $K\in\smX(G)$.
\end{thm}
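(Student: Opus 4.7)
The plan is to induct on the length $n$ of the subnormal series $(*)$. The base case $n = 1$ is immediate: the unique factor is $G$ itself, so $H^1 = K^1$ reads $H = K$.

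For $n \geq 2$, set $N := G_1$, which is normal in $G$ since $G_0 \trianglerighteq G_1$. The chain $N \geq G_2 \geq \dots \geq G_n = 1$ is subnormal in $N$ of length $n - 1$. By Lemma~\ref{lem:inductive} we have $H \cap N \in \smX(N)$; moreover, the projections of $H \cap N$ and of $K \cap N$ onto the factors of this shorter series are exactly $H^i$ and $K^i$ for $i = 2, \dots, n$ and hence coincide by hypothesis. Applying the inductive hypothesis inside $N$ yields an element $x \in \langle H \cap N,\, K \cap N\rangle \leq \langle H, K\rangle$ with $(K \cap N)^x = H \cap N$, and moreover $K \cap N \in \smX(N)$. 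A direct check, using $x \in N \trianglelefteq G$ together with $G_{i-1} \leq N$ for $i \geq 2$, confirms that conjugating $K$ by $x$ preserves every projection $K^i$. After this replacement I may assume $M := H \cap N = K \cap N$; then $H^1 = K^1$ gives $L := HN = KN$, and in particular $H, K \leq L$.

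For the remaining step I must find $y \in \langle H, K\rangle$ conjugating $K$ to $H$ in the reduced situation $H, K \leq L$, $H \cap N = K \cap N = M$, $HN = KN = L$. Observe that $K$ is now automatically an $\fX$-group, as an extension of $M \in \fX$ by $K/M \cong L/N \cong H/M \in \fX$, using closure of $\fX$ under extensions; and since $N \trianglelefteq G$ makes both $H$ and $K$ normalize $M$, we have $M \trianglelefteq T := \langle H, K\rangle$. If $L < G$, then restricting attention to $L$ with the length-$2$ subnormal series $L \geq N \geq 1$ and running a secondary induction on $|G|$ finishes the proof. If instead $L = G$, I resort to a Frattini-type argument in $L$: the $L$-conjugates of $M$ inside $N$ should all be $N$-conjugate (a pronormality property of $\smX$-subgroups), giving $L = N \cdot N_L(M)$; one then passes to the quotient $N_L(M)/M$, where $H/M$ and $K/M$ both map isomorphically onto $L/N$, and applies the theorem in this strictly smaller quotient.

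The principal obstacle is precisely this final case $L = G$. The argument for the weaker RSV assertion in \cite{RSV} delivers only $G$-conjugacy, whereas here the conjugator must be pinned down to the possibly much smaller subgroup $\langle H, K\rangle$. The critical leverage is the containment $T \leq L$ together with $M \trianglelefteq T$: these confine the Frattini element to $N_L(M) \cap T$ and ensure that a conjugator produced in $N_L(M)/M$ lifts back to an element of $T$. The pronormality of $\smX$-subgroups used in the Frattini step is itself a nontrivial input that may require a parallel induction on $|G|$; carefully bookkeeping the intermediate conjugators so that they remain in $T$ at every stage is where the real work of the proof lies.
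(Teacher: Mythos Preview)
Your reduction in the first two paragraphs is correct and matches the paper: induct on $n$, use Lemma~\ref{lem:inductive} to obtain $H\cap N \in \smX(N)$, apply the inductive hypothesis inside $N$ to conjugate $K$ within $\langle H,K\rangle$ so that $M := H \cap N = K \cap N$, and observe that $L := HN = KN$.

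The gap is in the final step; neither branch of your case split works as written. For $L < G$, running the theorem in $L$ with the series $L \geq N \geq 1$ requires $H \in \smX(L)$, but $L = HN$ need not be subnormal in $G$, so this is not available. For $L = G$, the identity $L = N \cdot N_L(M)$ is immediate from $L = HN$ and $H \leq N_L(M)$, so pronormality is irrelevant there---and then to ``apply the theorem'' in $N_L(M)/M$ you would still need $H/M \in \smX(N_L(M)/M)$, which you have no way to verify. You yourself flag that the argument is incomplete.

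What is missing is Lemma~\ref{lem:WHS} (the Wielandt--Hartley theorem for submaximal $\fX$-subgroups, proved in \cite{RSV}): since $M \in \smX(N)$, the quotient $N_N(M)/M$ is an $\fX'$-group. Once $M = H\cap N = K\cap N$, both $H$ and $K$ normalize $M$, so $J := \langle H, K\rangle \leq N_L(M)$. The series
\[
N_L(M) \;\trianglerighteq\; N_N(M) \;\trianglerighteq\; M \;\trianglerighteq\; 1
\]
is normal in $N_L(M)$ with successive factors an $\fX$-group (since $N_L(M) = H\cdot N_N(M)$ and $H\cap N_N(M) = M$, the top factor is $\cong H/M$), an $\fX'$-group (Lemma~\ref{lem:WHS}), and an $\fX$-group. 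Hence $N_L(M)$ is $\fX$-separable, and so is $J$ by Lemma~\ref{lem:separable}(ii). From $|N_L(M):H| = |N_N(M):M|$ one reads off $H \in \operatorname{Hall}_\fX(N_L(M))$, whence $H\in\operatorname{Hall}_\fX(J)$; as $|K| = |H|$, also $K \in \operatorname{Hall}_\fX(J)$, and the two are conjugate in $J$ by Lemma~\ref{lem:separable}(i). No case split, no secondary induction on $|G|$, and no pronormality are needed.
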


The next natural question is whether it is possible to see from the projections that an arbitrary subgroup $K$ of $G$ can be embedded into a given maximal or submaximal $\fX$-subgroup~$H$. In the case of a normal series $(*)$, Wielandt pointed out some sufficient condition under which this is possible \cite[Theorem 14.1]{Wie4}: {\it an $\fX$-subgroup~$K$ of~$G$ up to conjugacy is embedded in $H\in\mX(G)$ if $G$ has a normal series~$(*)$ and $K\leqslant N_G(H\cap G_{ i-1})G_i$ for each $i=1,\dots,n$}. Further, in \cite[Open Problems to Theorem~14.1]{Wie4}, he formulated the following general problem.\medskip

\noindent
{\bf Wielandt's Problem.} {\it Let a subgroup $K$ have the same projections onto the sections of the series~$(*)$ as some subgroup $K_*\leqslant H$, where $H\in\mX(G)$. Is it true that $K$ is conjugate to a subgroup of $H?$}\medskip

We show (see examples in Section~\ref{sec2}) that the answer to this question is negative even if the series $(*)$ is normal.\medskip

Later, see \cite[5.4(b)]{Wie3}, Wielandt announced an analogue of his assertion \cite[Theorem 14.1]{Wie4} for the case of a subnormal series: {\it if an $\fX$-subgroup $K$ of~$G$ with the subnormal series $(*)$ normalizes every projection $H^i$ of $H\in\smX(G)$, then $K$ is conjugate in $\langle H,K\rangle$ to a subgroup of $H$.} The problem with the formulation of the later assertion is that the condition $K$ {\it normalizes the projection} $H^i$ obviously needs to be clarified, since $N_G(H\cap G_ {i-1})G_i$ is no longer necessarily a subgroup. The main goal of this note is to propose an exact formulation and prove Wielandt's assertion which gives a sufficient condition for a $\fX$-subgroup $K$ to be embedded into a submaximal (in particular, maximal) $\fX$-subgroup $H$ by projections $H^i$ onto factors of the subnormal series $(*)$. To this end, we need the following definitions.

We say that two subgroups $H$ and $K$ of~$G$ {\em congruent modulo the series~$(*)$} and write
$$
H\equiv K\pmod *,
$$
if $H^i=K^i$ for each $i=1,\dots,n$. This defines an equivalence relation on the set of subgroups of~$G$. Moreover, if an automorphism $\alpha\in\operatorname{Aut}(G)$ stabilizes the series~$(*)$, i.e., $G_i^\alpha=G_i$ for each $i=1,\dots,n$, then for every $H,K\leqslant G,$
$$
H\equiv K\pmod *\quad \Rightarrow\quad H^\alpha\equiv K^\alpha\pmod *.
$$

We say that an element $x$ of~$G$ {\it normalizes a subgroup}~$H$ {\em modulo the series}~$(*)$, if it stabilizes the series~$(*)$ under conjugation and
$H\equiv H^x\pmod *.$ The subset of all such elements of $G$ is called the {\it normalizer of $H$ modulo the series~$(*)$} and denoted by $N_{\,\, G}^{(*)}(H)$. Obviously, $N_{\,\,G}^{(*)}(H)$ is a subgroup of~$G$.

\begin{thm}\label{InclusionForSubnormSeries} Let $G$ be a finite group with a subnormal series $(*)$ and $H\in\smX(G)$. If an~$\fX$-subgroup $K$ lies in $N_{\,\,G}^{(*)}(H)$, then $K$ is conjugate in $\langle H, K\rangle$ to a subgroup of~$H$.
\end{thm}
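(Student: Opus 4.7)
My plan is to induct on the length $n$ of the subnormal series $(*)$: the case $n=0$ (so $G=1$) is trivial, the inductive step for $n\geq 2$ reduces to lower $n$ via the normal subgroup $N:=G_1$, and the case $n=1$ serves as the essential base.

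For $n\geq 2$, I would restrict $(*)$ to $N$ to obtain the subnormal series $(*_N)\colon N\geq G_2\geq\dots\geq G_n=1$ of length $n-1$, setting $A:=H\cap N$ and $B:=K\cap N$. By Lemma~\ref{lem:inductive}, $A\in\smX(N)$; and a direct check shows that $B\leq N_N^{(*_N)}(A)$, the conditions on factors $i\geq 2$ matching directly, while the last-factor condition of $K\leq N_G^{(*)}(H)$ forces $K\leq N_G(A)$, so $B$ normalizes $A$ outright. By the inductive hypothesis applied inside $N$, there exists $\gamma\in\langle A,B\rangle$ with $B^\gamma\leq A$; since $A\trianglelefteq\langle A,B\rangle=AB$, we have $A^\gamma=A$. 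Simultaneously conjugating the pair $(H,K)$ by $\gamma\in\langle H,K\rangle$ preserves all hypotheses and reduces us to the case $K\cap N\leq H$. Passing to $\bar G:=G/N$ with the induced series $\bar G\geq 1$, Lemma~\ref{lem:inductive} gives $\bar H:=HN/N\in\smX(\bar G)$, and the first-factor condition yields $\bar K:=KN/N\leq N_{\bar G}(\bar H)$. Applying the $n=1$ case of the theorem in $\bar G$ produces $\bar g\in\langle\bar H,\bar K\rangle$ lifting to $g\in\langle H,K\rangle$ with $K^g\leq HN$. A Frattini-style argument in $HN$---using the equality $H^*\cap HN=H$ (which holds in any overgroup $G^*$ realizing the submaximality of $H$ and forces $H$ to be $\fX$-maximal inside $HN$) together with the intersection condition preserved from the first reduction---then produces $n_0\in N$ with $K^{gn_0}\leq H$, completing the inductive step.

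The main obstacle is the base case $n=1$. The series is $G\geq 1$, the hypothesis reduces to $K\leq N_G(H)$, and a direct computation shows that any candidate element of $\langle H,K\rangle=HK$ conjugates $H$ to $H^k$ for some $k\in K$ but leaves $K$ invariant, so the required conclusion is equivalent to $K\leq H$. Thus the substantive content of the base case is the statement: any $\fX$-subgroup normalizing an $\fX$-submaximal subgroup of $G$ must lie inside it. For genuinely $\fX$-maximal $H$ this is immediate, as $HK\in\fX$ and contains $H$. For properly $\fX$-submaximal $H$, I would embed $G$ into an ambient $G^*$ realizing the submaximality ($G$ subnormal in $G^*$, $H=G\cap H^*$, $H^*\in\mX(G^*)$); extend the $\fX$-subgroup $HK$ to an $\fX$-maximal $M^*$ of $G^*$; invoke Wielandt-type conjugacy of $\fX$-maximal subgroups in $G^*$ (if needed, bootstrapped via Theorem~\ref{th:conj} applied along an auxiliary subnormal series from $G$ up to $G^*$) to write $M^*=(H^*)^x$; and descend the resulting containment back to $G$ by exploiting subnormality to conclude $HK=H$. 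This descent from $G^*$ back to $G$ is where the delicate work lies.
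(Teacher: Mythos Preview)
Your base case $n=1$ is precisely the statement that an $\fX$-subgroup normalizing a submaximal $\fX$-subgroup must lie inside it, i.e.\ that $N_G(H)/H$ is an $\fX'$-group. This is Lemma~\ref{lem:WHS} (the Wielandt--Hartley theorem for submaximal $\fX$-subgroups), which the paper imports from~\cite{RSV} as a black box. Your proposed proof of it does not work: you extend $HK$ to some $M^*\in\mX(G^*)$ and then ``invoke Wielandt-type conjugacy'' to get $M^*=(H^*)^x$, but $\fX$-maximal subgroups of a group are \emph{not} conjugate in general, and Theorem~\ref{th:conj} only conjugates two subgroups whose projections onto a given subnormal series coincide---there is no reason the projections of $M^*$ and $H^*$ onto any series should agree. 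The ``delicate descent'' you flag is exactly the content of Lemma~\ref{lem:WHS}, and it cannot be bootstrapped from Theorem~\ref{th:conj}.

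The inductive step also has problems. First, Lemma~\ref{lem:inductive} says $H\cap N\in\smX(N)$; it says nothing about $HN/N$, and the claim $HN/N\in\smX(G/N)$ is not available here. Second, your assertion that ``the last-factor condition forces $K\leq N_G(A)$'' with $A=H\cap G_1$ is false for $n>2$: the bottom factor of $(*)$ only gives $K\leq N_G(H_{n-1})$, not $K\leq N_G(H_1)$, so you cannot conclude $A\trianglelefteq\langle A,B\rangle$. Third, the claim that $H$ is $\fX$-maximal in $HN$ (needed for your Frattini step) is unjustified: $HN$ is not subnormal in $G^*$, so the equality $H^*\cap HN=H$ does not force maximality.

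For comparison, the paper argues bottom-up rather than top-down. It proves a stronger technical statement (Proposition~\ref{ForSubnormSeries}) by induction on $n-i$: assuming $K$ already normalizes $H_{i+1}$, one works inside $G_i$ with the length-two \emph{normal} series $G_i\trianglerighteqslant G_{i+1}\trianglerighteqslant 1$, applies Theorem~\ref{thm:Wielandt's} to see that the subgroup generated by the congruence class of $H_i$ is $\fX$-separable with $H_i$ as a Hall $\fX$-subgroup, and then uses ordinary Hall conjugacy to adjust $H_i$ so that $K$ normalizes it. At the end ($i=0$) one has $K\leq N_G(H^x)$, and Lemma~\ref{lem:WHS} finishes. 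The essential ingredients are thus Theorem~\ref{thm:Wielandt's} and Lemma~\ref{lem:WHS}; your outline uses neither and does not replace them.
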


The proofs of Theorems~\ref{th:conj} and~\ref{InclusionForSubnormSeries} are based on the following property of submaximal $\fX$-subgroups: {\it if $H\in\operatorname{sm}_{\fX}(G) $, then $N_G(H)/H$ is a ${\fX'}$-group}, see Lemma~\ref{lem:WHS} below (here an {\it $\fX'$-group } is a group that does not contain nontrivial $\fX$-subgroups), and use the concept of an $\fX$-separable group which goes back to the works of Chunikhin (see, e.g.,~\cite{Chun}). According to~\cite[Definition~12.8]{Wie4}, a group is said to be {\em $\fX$-separable}, if it has a normal or, equivalently, subnormal series such that each factor is an $\fX$- or $\fX'$-group. The least possible number of $\fX$-factors in such series is called the {\it $\fX$-length} of an $\fX$-separable group $G$ and is denoted by $l_{\fX}(G)$. For example, if $\fX$ is the class of $p$-groups for some prime~$p$, then any $\fX$-separable group is $p$-solvable and its $\fX$-length is the $p$-length, as in the famous Hall and Higman paper~\cite{HH}.

Every maximal $\fX$-subgroup $H$ of an $\fX$-separable group $G$ is a {\em Hall $\fX$-subgroup} ($\fX$-Hall, $H\in\operatorname{Hall}_{\fX}(G)$) that is an $\fX$-subgroup whose index is not divisible by the primes $p$ such that $\fX$ contains a subgroup of order~$p$. Therefore, all maximal $\fX$-subgroups in such an $\fX$-separable group $G$ are conjugate \cite[Theorem~12.10]{Wie4}. In particular, every $\fX$-subgroup of $K$ lies in some conjugate of~$H$. We prove (see Proposition~\ref{ForSubnormSeries} in Section~\ref{sec2}) that under the hypothesis of Theorem~\ref{InclusionForSubnormSeries} the group $\langle H, K\rangle$ is $\fX$-separable and $H$ is a Hall $\fX$-subgroup of it. We do this using induction on the length of the series and the following assertion which is of interest on its own right.

\begin{thm}\label{thm:Wielandt's} Let $G$ be a finite group with a normal series $(*)$ of length $n$, $H\in\smX(G)$, and $\Delta=\{K\leqslant G \mid K\equiv H\pmod *\} $. Then the subgroup $W=\langle\Delta\rangle$ is $\mathfrak{X}$-separable, $l_{\mathfrak{X}}(W)\leqslant n$, and $H\in\operatorname{Hall}_ {\fX}(W)$.
\end{thm}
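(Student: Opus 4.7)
The plan is to proceed by induction on $n$. The base case $n=1$ is trivial: the condition $K \equiv H \pmod{*}$ forces $K=H$, so $\Delta=\{H\}$, $W=H \in \fX$, and the conclusions hold immediately.

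For the inductive step $n \geq 2$, I would first apply Theorem~\ref{th:conj} to each $K \in \Delta$: it gives $K = H^{x_K}$ for some $x_K \in \langle H, K \rangle \leq W$, and $K \equiv H \pmod{*}$ places $x_K$ in $N_W^{(*)}(H)$, so $K = H^{x_K} \leq N_W^{(*)}(H)$. Taking the union over $\Delta$ yields $W = \langle \Delta \rangle \leq N_G^{(*)}(H)$; that is, $H^w \equiv H \pmod{*}$ for every $w \in W$. Separately, the equality $K^1 = H^1$ gives $KG_1 = HG_1$, so $K \leq HG_1$ for every $K \in \Delta$, and hence $W \leq HG_1$. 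Setting $V = W \cap G_1 \trianglelefteq W$, the modular law delivers $W = HV$, and therefore $W/V \cong H/(H \cap G_1) \cong H^1 \in \fX$, which handles the top factor.

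To analyse $V$ inductively, I would apply the hypothesis to $(G_1, (**))$, where $(**)$ is the series $G_1 \geq G_2 \geq \dots \geq G_n = 1$ of length $n-1$, with the submaximal $\fX$-subgroup $H \cap G_1 \in \smX(G_1)$ supplied by Lemma~\ref{lem:inductive}. However, $V$ may fail to coincide with the analogous group $\langle \Delta' \rangle$ for $\Delta' = \{M \leq G_1 \mid M \equiv H \cap G_1 \pmod{**}\}$; for instance, in $G = S_3$ with $\fX$ the class of $2$-groups and $H$ a $2$-Sylow, one finds $V = A_3$ while $\langle \Delta' \rangle = 1$. I would therefore prove by the same induction the stronger statement: any subgroup $W^* \leq G$ with $H \leq W^* \leq N_G^{(*)}(H)$ is $\fX$-separable with $l_\fX(W^*) \leq n$ and $H \in \operatorname{Hall}_\fX(W^*)$. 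Applied to $V$, the condition $(H \cap G_1)^v \equiv H \cap G_1 \pmod{**}$ for each $v \in V$ follows routinely from $H^v \equiv H \pmod{*}$, so the strengthened hypothesis is met, and the induction delivers $\fX$-separability of $V$, $l_\fX(V) \leq n-1$, and $H \cap G_1 \in \operatorname{Hall}_\fX(V)$. Prepending $W \triangleright V$ to the resulting series of $V$ (whose top factor $W/V$ lies in $\fX$) yields $l_\fX(W) \leq n$, and $|W:H| = |V : H \cap G_1|$ is an $\fX'$-number, so $H \in \operatorname{Hall}_\fX(W)$.

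The main obstacle is the inductive step of the strengthened statement in the case $W^* \not\leq HG_1$, for which the decomposition $W^* = H(W^* \cap G_1)$ fails and the top quotient $W^* G_1/G_1 \leq G/G_1$ must be analysed directly. The natural route, via Lemma~\ref{lem:WHS} in $G/G_1$, rests on $HG_1/G_1 \in \smX(G/G_1)$---the preservation of submaximality under canonical quotients by normal subgroups---a natural companion to Lemma~\ref{lem:inductive} that I expect to be available as a standard property of submaximal $\fX$-subgroups either cited or established separately by the authors.
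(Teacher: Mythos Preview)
Your strategy of strengthening the inductive hypothesis to cover every $W^*$ with $H\leqslant W^*\leqslant N_G^{(*)}(H)$ is sound, but the resolution you propose for the obstacle is not supported by the paper: the implication $H\in\smX(G)\Rightarrow HG_1/G_1\in\smX(G/G_1)$ is neither stated, cited, nor used anywhere, and it does not follow from Lemmas~\ref{lem:inductive} or~\ref{lem:WHS}. As written, then, your inductive step for general $W^*$ has a genuine gap. It can, however, be closed with the tools already at hand. Once the strengthened hypothesis gives that $V^*=W^*\cap G_1$ is $\fX$-separable with $H_1\in\operatorname{Hall}_\fX(V^*)$, observe that $HV^*$ is a normal $\fX$-separable subgroup of $W^*$ (normality comes from $W^*\leqslant N_G(HG_1)$) with $H\in\operatorname{Hall}_\fX(HV^*)$; now the Frattini argument (Lemma~\ref{lem:Frattini}) yields $W^*=HV^*\cdot N_{W^*}(H)$, so $W^*/HV^*$ is a section of $N_G(H)/H$ and hence an $\fX'$-group by Lemma~\ref{lem:WHS} applied in $G$ itself. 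No passage to $G/G_1$ is needed. (Incidentally, your appeal to Theorem~\ref{th:conj} to obtain $W\leqslant N_G^{(*)}(H)$ is harmless but unnecessary: items~(ii) and~(v) of Lemma~\ref{properties} give $K\leqslant N_G^{(*)}(K)=N_G^{(*)}(H)$ for each $K\in\Delta$ directly.)

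The paper avoids the strengthening altogether and inducts on the original statement. It sets $\Gamma=\{L\leqslant G_1\mid L\equiv H_1\pmod{*1}\}$ and applies the inductive hypothesis in $G_1$ to $V=\langle\Gamma\rangle$; the substantive extra work is the equality $\Gamma=\{K\cap G_1\mid K\in\Delta\}$, which forces $V\leqslant W_1=W\cap G_1$ and makes $W\trianglerighteq W_1\trianglerighteq V\trianglerighteq 1$ a normal series of $W$ with $W/W_1\in\fX$ and $W_1/V\in\fX'$, the latter via Frattini and Lemma~\ref{lem:WHS} applied to $H_1\in\smX(G_1)$. In effect, the paper proves Theorem~\ref{thm:Wielandt's} first and then derives Corollary~\ref{cor:ForNormSeries} by exactly the Frattini-plus-Lemma~\ref{lem:WHS} step described above, whereas you attempt to prove the corollary directly by induction and read off the theorem as a special case.
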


In fact, even a little more can be proved.

\begin{cor}\label{cor:ForNormSeries} Let $G$ be a finite group with a normal series~$(*)$ of length~$n$ and $H\in\smX(G)$. Then the subgroup $N_{\,\,G}^{(*)}(H)$ is $\fX$-separable, ${l_{\mathfrak{X}}(N_{\,\,G}^{(*)}(H))\leqslant n}$, and $H\in\operatorname{Hall}_{\mathfrak{X}}(N_{\,\,G}^{(*)}(H))$.
\end{cor}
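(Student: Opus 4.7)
Set $N = N_{\,\,G}^{(*)}(H)$ and $\Delta = \{K \leqslant G : K \equiv H \pmod *\}$. The plan is to apply Theorem~\ref{thm:Wielandt's} to $W = \langle\Delta\rangle$, which delivers an $\fX$-separable subgroup with $l_\fX(W) \leqslant n$ containing $H$ as a Hall $\fX$-subgroup, and then to realize $N$ as a normal extension of $W$ by an $\fX'$-quotient so that all three conclusions transfer from $W$ to $N$.

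First I would establish the inclusions $W \leqslant N$ and $W \trianglelefteq N$. For the former, fix $K \in \Delta$; because $(*)$ is a \emph{normal} series, $G_{i-1} \trianglelefteq G$, hence $K_{i-1} = K \cap G_{i-1}$ is normal in~$K$, and therefore $K_{i-1}G_i = H_{i-1}G_i$ is $K$-invariant (the equality uses $K^i = H^i$). It follows that every $y \in K$ normalizes $H_{i-1}G_i$, whence $(H^y)^i = H_{i-1}^y G_i / G_i = H^i$ for every~$i$, so $H^y \equiv H \pmod *$ and $y \in N$. For normality, any $x \in N$ automatically stabilizes the normal series $(*)$, so by the stability property recorded in Section~\ref{sec1}, $K^x \equiv H^x \equiv H \pmod *$ for every $K \in \Delta$; hence $\Delta^x = \Delta$ and $W^x = W$.

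The Frattini argument now applies: since $W \trianglelefteq N$ is $\fX$-separable with $H \in \operatorname{Hall}_\fX(W)$, every $N$-conjugate of $H$ is already conjugate to $H$ inside $W$, yielding $N = N_G(H) \cdot W$ (using the inclusion $N_G(H) \leqslant N$, which is immediate from the definition of~$N$). By Lemma~\ref{lem:WHS}, $N_G(H)/H$ is an $\fX'$-group, and since $\fX$ is subgroup-closed, the class $\fX'$ is closed under homomorphic images. Thus $N/W \cong N_G(H)/(N_G(H) \cap W)$, being a quotient of $N_G(H)/H$, is itself $\fX'$. Appending this top $\fX'$-factor to an $\fX$-separable series of $W$ having at most $n$ $\fX$-factors realizes $N$ as an $\fX$-separable group with $l_\fX(N) \leqslant n$, and $|N:H| = |N:W|\cdot|W:H|$ is coprime to every prime $p$ for which $\fX$ contains a group of order $p$, so $H \in \operatorname{Hall}_\fX(N)$.

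The principal technical point is the inclusion $W \leqslant N$: it rests on $K_{i-1} \trianglelefteq K$ for every $K \in \Delta$, which is precisely what normality (rather than mere subnormality) of the series $(*)$ ensures; this also explains why the corollary is phrased for a normal series.
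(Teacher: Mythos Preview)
Your proof is correct and follows essentially the same route as the paper's: establish $W\leqslant N$ and $W\trianglelefteq N$, invoke Theorem~\ref{thm:Wielandt's}, and use the Frattini argument together with Lemma~\ref{lem:WHS} to show $N/W$ is an $\fX'$-group. The paper packages your verification of $W\leqslant N$ and $W\trianglelefteq N$ into items~(ii), (iii), and~(v) of Lemma~\ref{properties}, whereas you unfold these directly; otherwise the arguments coincide.
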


It would be interesting to know whether the assertions of Theorem~\ref{thm:Wielandt's} and Corollary~\ref{cor:ForNormSeries} are true if the series~$(*)$ is subnormal.

\section{Proofs and examples}\label{sec2}

As mentioned, the general idea underlying the proofs of our results is that a submaximal $\fX$-subgroup can be immersed into some $\fX$-separable subgroup, where it turns out to be $\fX$-Hall.

\begin{lem}{\cite[Theorems~12.10 and~13.5]{Wie4}}\label{lem:separable}
Let $G$ be a finite $\fX$-separable group. Then
\begin{itemize}
\item[(i)] $\mX(G)=\operatorname{Hall}_{\fX}(G)$ and every two elements from this set are conjugate$;$
\item[(ii)] every subgroup of $G$ is also $\fX$-separable.
 \end{itemize}
\end{lem}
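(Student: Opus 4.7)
I would handle (ii) first and then prove (i) by induction on $|G|$. For (ii), given a subgroup $U\leqslant G$, intersect the $\fX$-separable normal series of $G$ with $U$: the resulting series $U=U\cap G_0\geqslant\cdots\geqslant U\cap G_n=1$ is normal in $U$, and its $i$-th factor $(U\cap G_{i-1})/(U\cap G_i)\cong (U\cap G_{i-1})G_i/G_i$ embeds into $G_{i-1}/G_i$. Since $\fX$ is subgroup-closed by hypothesis and the class of $\fX'$-groups is trivially subgroup-closed (any nontrivial $\fX$-subgroup of $A\leqslant B$ is a nontrivial $\fX$-subgroup of $B$), each factor remains $\fX$ or $\fX'$, so $U$ is $\fX$-separable.

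For (i), induct on $|G|$. Let $G_{n-1}$ be the lowest nontrivial term of the normal series witnessing $\fX$-separability; being a factor of that series, $G_{n-1}$ is itself $\fX$ or $\fX'$. Choose a minimal normal subgroup $N$ of $G$ contained in $G_{n-1}$; then $N$ inherits the same property. If $N\in\fX$, then for any $H\in\mX(G)$ the product $HN$ is an $\fX$-group by extension-closure, forcing $N\leqslant H$. Passing to the $\fX$-separable quotient $G/N$, the induction hypothesis gives that $H/N$ is an $\fX$-Hall subgroup of $G/N$, unique up to conjugacy; these properties lift to $G$ via $|G:H|=|G/N:H/N|$.

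If instead $N\in\fX'$, then $H\cap N=1$ for every $H\in\mX(G)$ (as the intersection is simultaneously an $\fX$- and an $\fX'$-subgroup). By induction $G/N$ has a unique-up-to-conjugacy $\fX$-Hall subgroup $M/N$, and after conjugating $H$ we may take $HN/N\leqslant M/N$. The subgroup $M$ has $N\in\fX'$ normal with $M/N\in\fX$, and $\gcd(|N|,|M{:}N|)=1$ because $\pi(M/N)\subseteq\pi(\fX)$ while $\pi(N)\cap\pi(\fX)=\varnothing$ (else Cauchy's theorem would produce an $\fX$-subgroup of $N$ of prime order). Schur--Zassenhaus (together with Feit--Thompson for the conjugacy part) then yields both existence and $M$-conjugacy of complements to $N$ in $M$, along with the fact that every $\pi(M/N)$-subgroup of $M$ lies in some complement. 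Applied to the $\pi(\fX)$-group $H$, the $\fX$-maximality of $H$ forces $H$ itself to be a complement to $N$ in $M$; combined with conjugacy in $G/N$, this gives conjugacy of all $\fX$-maximals in $G$. Finally $|G:H|=|G:M|\cdot|N|$ is a $\pi(\fX)'$-number, so $H\in\operatorname{Hall}_{\fX}(G)$.

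The main obstacle is the $\fX'$-case, which rests on Schur--Zassenhaus together with the arithmetic observation (via Cauchy's theorem) that every $\fX'$-group is a $\pi(\fX)'$-group; without this coprimality one could not invoke Schur--Zassenhaus to obtain existence, conjugacy, and the embedding of $\fX$-subgroups into complements. The $\fX$-case and part (ii) are then essentially routine.
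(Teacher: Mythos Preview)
The paper does not supply its own proof of this lemma; it is merely quoted with a reference to Wielandt's lecture notes \cite[Theorems~12.10 and~13.5]{Wie4}, so there is nothing in the text to compare your argument against line by line.

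That said, your proposal is the standard Chunikhin--Hall argument and is correct. Part~(ii) via intersecting the normal $\fX$-separable series with $U$ is routine and exactly how Wielandt's Theorem~13.5 is proved. For part~(i), the induction on $|G|$ splitting on whether a minimal normal $N$ lies in $\fX$ or $\fX'$ is the classical route: in the $\fX$-case you correctly absorb $N$ into every $\fX$-maximal and pass to $G/N$; in the $\fX'$-case the key observation that $\pi(N)\cap\pi(\fX)=\varnothing$ (via Cauchy) gives the coprimality needed for Schur--Zassenhaus, and your embedding of $H$ into a complement of $N$ in $M$ is justified by applying Schur--Zassenhaus conjugacy inside $HN$ (both $H$ and the relevant piece of a fixed complement are complements to $N$ there). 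One small omission: you establish $\mX(G)\subseteq\operatorname{Hall}_\fX(G)$ but do not explicitly argue the reverse inclusion; it is immediate, since any $\fX$-Hall subgroup has $\pi(\fX)'$-index while any $\fX$-overgroup has $\pi(\fX)$-index over it, forcing equality.
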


This lemma allows us to use the following argument, which is similar to the well-known Frattini argument for Sylow subgroups.

\begin{lem}[Frattini argument]\label{lem:Frattini}
Let a finite group $G$ contain a normal $\fX$-separable subgroup~$V$ and $H\in\operatorname{Hall}_{\fX}(V)$. Then $G=VN_G(H)$.
\end{lem}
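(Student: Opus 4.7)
The plan is to mimic the classical Frattini argument for Sylow subgroups, substituting Hall $\mathfrak{X}$-subgroups in place of Sylow $p$-subgroups and invoking Lemma~\ref{lem:separable}(i) in place of Sylow's theorem. The goal is to show that every $g \in G$ factors as a product of an element of $N_G(H)$ and an element of $V$.

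First I would fix an arbitrary $g \in G$ and consider the conjugate $H^g$. Since $V$ is normal in $G$, we have $V^g = V$, hence $H^g \leqslant V$. Conjugation preserves both the order of a subgroup and the property of being an $\mathfrak{X}$-subgroup, so $H^g$ is an $\mathfrak{X}$-subgroup of $V$ of the same order as $H$, whence $H^g \in \operatorname{Hall}_{\mathfrak{X}}(V)$.

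Next, because $V$ is $\mathfrak{X}$-separable, Lemma~\ref{lem:separable}(i) asserts that all members of $\operatorname{Hall}_{\mathfrak{X}}(V)$ are pairwise conjugate in $V$. Applied to $H$ and $H^g$, this yields some $v \in V$ with $H^g = H^v$, equivalently $gv^{-1} \in N_G(H)$. Writing $g = (gv^{-1}) v$ exhibits $g$ as an element of $N_G(H)\, V$, and normality of $V$ gives $N_G(H)\, V = V N_G(H)$, which is the desired conclusion.

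I do not anticipate any serious obstacle here: the statement is essentially a direct corollary of Lemma~\ref{lem:separable}(i), and the only point requiring a line of checking is that $H^g$ again lies in $\operatorname{Hall}_{\mathfrak{X}}(V)$, which follows immediately from the normality of $V$ in $G$ together with the invariance of order and of the class $\mathfrak{X}$ under conjugation.
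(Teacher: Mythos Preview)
Your proof is correct and follows exactly the same route as the paper's: conjugate $H$ by an arbitrary $g$, use normality of $V$ to see $H^g\in\operatorname{Hall}_{\mathfrak{X}}(V)$, apply Lemma~\ref{lem:separable}(i) to conjugate $H^g$ back to $H$ by some $v\in V$, and conclude $g\in N_G(H)V=VN_G(H)$. The only difference is that you spell out in slightly more detail why $H^g$ is again a Hall $\mathfrak{X}$-subgroup of $V$.
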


\begin{proof}
If $g\in G$, then $H^g\in\operatorname{Hall}_{\fX}(V)$. By Lemma~\ref{lem:separable}(i), there exists an element $x\in V$ such that $H^g=H^x$. Hence $gx^{-1}\in N_G(H)$ and $g\in N_G(H)x\subseteq N_G(H)V=VN_G(H)$.
\end{proof}

Hall $\fX$-subgroups of an arbitrary group lend themselves well to study, thanks to the remarkable inductive property presented in the next lemma.

\begin{lem}\label{lem:Hall}
If $H\in\operatorname{Hall}_{\fX}(G)$ and a subgroup $N$ is subnormal in $G$, then $H\cap N\in\operatorname{Hall}_{\fX}(N)$.
\end{lem}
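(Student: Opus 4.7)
The plan is to reduce to the case of a normal subgroup and then induct on the length of a subnormal series from $N$ to $G$.

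First, I would settle the normal case: suppose $N\trianglelefteq G$. Since $\fX$ is closed under taking subgroups, $H\cap N$ is an $\fX$-subgroup of $N$. Because $N$ is normal, $HN$ is a subgroup of $G$, and the second isomorphism theorem gives $|N:H\cap N|=|HN:H|$, a divisor of $|G:H|$. By the definition of a Hall $\fX$-subgroup, no prime $p$ for which $\fX$ contains a group of order $p$ divides $|G:H|$; consequently the same holds for $|N:H\cap N|$, and therefore $H\cap N\in\operatorname{Hall}_{\fX}(N)$.

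For the general subnormal case, fix a subnormal series $N=N_k\trianglelefteq N_{k-1}\trianglelefteq\dots\trianglelefteq N_0=G$ and induct on $k$. The base case $k=1$ is the normal case just treated. For $k\geqslant 2$, the subgroup $N_{k-1}$ is subnormal in $G$ through a shorter chain, so the inductive hypothesis yields $H\cap N_{k-1}\in\operatorname{Hall}_{\fX}(N_{k-1})$. Since $N\trianglelefteq N_{k-1}$, applying the normal case inside $N_{k-1}$, with the Hall $\fX$-subgroup $H\cap N_{k-1}$ in place of $H$ and the normal subgroup $N$ in place of the ambient normal subgroup, produces
\[
H\cap N=(H\cap N_{k-1})\cap N\in\operatorname{Hall}_{\fX}(N),
\]
completing the induction.

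No serious obstacle appears: the whole argument rests on the second isomorphism theorem and the definition of a Hall $\fX$-subgroup, together with the obvious induction on the subnormal defect of $N$ in $G$. The only minor point to verify carefully is that the index condition transfers to the intersection, which is ensured because $|N:H\cap N|$ divides $|G:H|$ once $N$ is normal.
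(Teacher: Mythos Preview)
Your argument is correct: the reduction to the normal case via the second isomorphism theorem, together with the observation that $|N:H\cap N|=|HN:H|$ divides $|G:H|$, is exactly the standard route, and the induction on the subnormal defect is clean.

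Note, however, that the paper does not actually supply a proof of this lemma; it is stated as a well-known inductive property of Hall $\fX$-subgroups and used without further comment. So there is no ``paper's own proof'' to compare against here---your write-up simply fills in the routine details that the authors chose to omit.
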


Unlike maximal $\fX$-subgroups, any $\fX$-submaximal subgroup enjoys the same property.

\begin{lem}\label{lem:inductive}
If $H\in\smX(G)$ and a subgroup $N$ is subnormal in $G$, then $H\cap N\in\smX(N)$.
\end{lem}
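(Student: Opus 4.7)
The plan is to use the defining ambient group for $H$ as the ambient group for $H\cap N$ as well. By hypothesis there exist a finite group $G^*$ and a subgroup $H^*\in\mX(G^*)$ such that $G$ is subnormal in $G^*$ and $H=G\cap H^*$. I want to exhibit the same $G^*$ and $H^*$ as witnesses to submaximality of $H\cap N$ in $N$.

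First I would verify that $N$ is subnormal in $G^*$. Since $N$ is subnormal in $G$ and $G$ is subnormal in $G^*$, transitivity of the subnormal relation (concatenating the two subnormal chains) gives $N\trianglelefteq\trianglelefteq G^*$. Next I would simplify the intersection: because $N\leqslant G$, we have
\[
H\cap N=(G\cap H^*)\cap N=(G\cap N)\cap H^*=N\cap H^*.
\]
Thus $H\cap N$ is obtained as the intersection with $N$ of a maximal $\mathfrak{X}$-subgroup $H^*$ of an overgroup $G^*$ in which $N$ is subnormal. This is precisely the definition of $H\cap N\in\smX(N)$.

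There is really no main obstacle here; the statement is a direct formal consequence of the definition of submaximality together with transitivity of subnormality. The only point worth remarking on is why this argument fails for maximality (i.e., why submaximality is the right notion for induction): if $H$ itself is only known to be $\fX$-maximal in $G$, the definition does not come with a canonical overgroup $G^*$, so one cannot reduce $H\cap N$ to an intersection with a larger $\fX$-maximal subgroup. Submaximality builds in such an overgroup, and that is exactly what makes the inductive argument go through.
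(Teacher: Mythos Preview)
Your proof is correct and matches the paper's treatment: the paper simply remarks that this property ``immediately follows from the definition,'' and your argument is precisely the unpacking of that remark, using transitivity of subnormality and the trivial identity $H\cap N=(G\cap H^*)\cap N=N\cap H^*$.
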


This property immediately follows from the definition, in contrast to another fundamental property of $\fX$-submaximal subgroups, first formulated by Wielandt in \cite[5.4(a)]{Wie3}, the proof of which, however, was first presented quite recently in \cite[Theorem~2]{RSV}.

\begin{lem}\label{lem:WHS}
If $H\in\smX(G)$, then $N_G(H)/H$ is an ${\fX'}$-group.
\end{lem}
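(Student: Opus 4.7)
The plan is to exploit the defining embedding $G\trianglelefteq\trianglelefteq G^*$ with $H^*\in\mX(G^*)$ and $H=G\cap H^*$, and to derive a contradiction from the $\fX$-maximality of $H^*$ under the assumption that $N_G(H)/H$ contains a nontrivial $\fX$-subgroup. Since $\fX$ is closed under subgroups and extensions, I may replace this subgroup by an image $L/H$ of prime order $p$ with $C_p\in\fX$, producing a concrete $\fX$-subgroup $L=H\langle x\rangle\leqslant N_G(H)$ of index $p$ over~$H$; the goal is then to rule out the existence of such an $L$.

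First, I would reduce to the case $G\trianglelefteq G^*$ by induction on the length $k$ of a shortest subnormal chain $G=N_0\trianglelefteq N_1\trianglelefteq\dots\trianglelefteq N_k=G^*$ in the definition of submaximality. For $k=0$ one has $H=H^*\in\mX(G)$, and the $\fX$-subgroup $L\supsetneq H$ directly contradicts the $\fX$-maximality of $H$. For $k\geqslant 2$, Lemma~\ref{lem:inductive} gives $\tilde H=N_1\cap H^*\in\smX(N_1)$ through the shorter chain $N_1\trianglelefteq\dots\trianglelefteq G^*$, allowing an inductive application of the lemma inside $N_1$; combined with the residual normal step $G\trianglelefteq N_1$, this leaves the essential case $k=1$.

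In this reduced setting, $H^*$ acts on $G$ by conjugation and fixes $H=G\cap H^*$ setwise, so $H\trianglelefteq H^*$ and $N_G(H)$ is $H^*$-invariant. I would then form the $H^*$-closure
\[
\hat L=\langle L^h : h\in H^*\rangle\leqslant N_G(H),
\]
which is $H^*$-invariant by construction. Since $H^*$ normalises $\hat L$, the product $\hat L H^*$ is a subgroup of $G^*$ with $\hat L\cap H^*=G\cap H^*=H$, whence $\hat L H^*/\hat L\cong H^*/H\in\fX$. If I can establish that $\hat L$ itself lies in $\fX$, then $\hat L H^*$ is an extension of two $\fX$-groups, hence an $\fX$-subgroup of $G^*$; since $L\not\leqslant H$ implies $\hat L\not\leqslant H^*$, this subgroup would strictly contain $H^*$, contradicting $H^*\in\mX(G^*)$ and completing the proof.

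The hard part will be precisely the verification that $\hat L\in\fX$, as a join of $\fX$-subgroups need not itself be $\fX$. To handle this I would pass to a minimal counterexample (minimising $|G^*|$, then $|L|$) and exploit the conjugation action of the $\fX$-group $H^*/H$ on $\hat L/H$, together with the fact that $\hat L/H$ is generated by the conjugate cyclic $p$-subgroups $L^h/H$. Combined with Lemmas~\ref{lem:separable}--\ref{lem:Hall} and the Frattini argument of Lemma~\ref{lem:Frattini} applied to the normal closure of $L$ in a minimal $\fX$-separable subgroup of $G^*$ containing it, this analysis should force $\hat L/H$ to be a $p$-group, whence $\hat L\in\fX$; this is the content of the detailed argument carried out in~\cite[Theorem~2]{RSV}.
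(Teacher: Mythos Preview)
The paper itself does not prove this lemma at all: it merely records the statement and cites \cite[Theorem~2]{RSV} for the proof. In that sense your proposal goes further than the paper, since you attempt to outline the argument behind the citation, and you correctly identify that the crux (showing the $H^*$-closure $\hat L$ of $L$ is an $\fX$-group, so that $\hat L H^*$ is an $\fX$-overgroup of $H^*$) is exactly what \cite{RSV} establishes.

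There is, however, a genuine gap in your reduction to the case $k=1$. Your $k=1$ argument derives its contradiction from the $\fX$-\emph{maximality} of $H^*$ in $G^*$: once $\hat L\in\fX$, the group $\hat LH^*$ is an $\fX$-subgroup of $G^*$ strictly larger than $H^*$. But in your inductive step for $k\geqslant 2$ you only obtain $\tilde H=N_1\cap H^*\in\smX(N_1)$ together with the conclusion $N_{N_1}(\tilde H)/\tilde H\in\fX'$ from the inductive hypothesis; you do \emph{not} have $\tilde H\in\mX(N_1)$. Replaying the $k=1$ argument with $(N_1,\tilde H)$ in place of $(G^*,H^*)$ would produce an $\fX$-group $\hat L\tilde H\supsetneq\tilde H$, yet this contradicts neither submaximality of $\tilde H$ nor the $\fX'$-property of $N_{N_1}(\tilde H)/\tilde H$, because nothing forces $\hat L$ to normalise $\tilde H$. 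So ``this leaves the essential case $k=1$'' is not justified as written; the reduction needs either a different inductive statement or the direct, un-reduced argument carried out in \cite{RSV}.
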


If a group $G$ has a subnormal series~$(*)$, then in inductive reasoning it is also convenient to denote by~$(*i)$ the final segment of the series~$(*)$ starting at $G_i$, i.e.,
\begin{equation*}\label{ser_i}
 G_i\geqslant G_{i+1}\geqslant \dots\geqslant G_n=1.\eqno{(*i)}
\end{equation*}

It is easy to see that the statements below are equivalent:
\begin{itemize}
   \item[(i)] $
 H_{\phantom{i}}\equiv K_{\phantom{i}}\pmod *;$
 \item[(ii)] $H_i\equiv K_i\pmod {*}$ for each $i=1,\dots, n;$
   \item[(iii)] $ H_i\equiv K_i\pmod {*{{i}}}$ for each $i=1,\dots, n.$
\end{itemize}

Induction and Lemmas~\ref{lem:inductive} and \ref{lem:WHS} allow us to prove the following proposition which yields Theorem~\ref{th:conj}.

\begin{prop}\label{conj}
Let $G$ be a group with a subnormal series $(*)$ and $H\in\smX(G)$. Then for any subgroup $K$ of $G$ such that $K\equiv H\pmod *$, the group $J=\langle H,K\rangle$ is $\fX$-separable and $H,K\in \operatorname{Hall}_{\fX}(J)$, in particular, $H$ and $K$ are conjugate in~$J$.
\end{prop}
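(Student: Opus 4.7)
The plan is induction on the length $n$ of the series $(*)$. The base case $n=1$ is immediate: the congruence $H\equiv K\pmod *$ then just reads $H^1=K^1$, i.e.\ $H=K$, so $J=H$ is an $\fX$-group and the claim is trivial.

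For the inductive step $n\geqslant 2$, I would use the subnormality of $(*)$: since $G_1\trianglelefteqslant G$, the subgroup $N:=J\cap G_1$ is normal in $J$. The equality $H^1=K^1$ forces $HG_1=KG_1$, so $J/N\cong HG_1/G_1=H^1$, which is a homomorphic image of the $\fX$-group $H$ and hence $\fX$; in addition, $J=HN=KN$ with $H\cap N=H_1$ and $K\cap N=K_1$. Lemma~\ref{lem:inductive} applied to $H\in\smX(G)$ and $G_1\triangleleft\triangleleft G$ gives $H_1\in\smX(G_1)$, and the equivalences (i)--(iii) stated just above the proposition yield $K_1\equiv H_1$ modulo the shorter series $(*1)$ of length $n-1$. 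The induction hypothesis applied inside $G_1$ then delivers that $J_1:=\langle H_1,K_1\rangle$ is $\fX$-separable with $H_1,K_1\in\operatorname{Hall}_\fX(J_1)$.

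It will then suffice to show that $N$ itself is $\fX$-separable with $H_1,K_1\in\operatorname{Hall}_\fX(N)$: granting this, $J$ becomes an $\fX$-separable extension of $N$ by the $\fX$-quotient $J/N$; the identities $|J:H|=|HN:H|=|N:H_1|$ and $|J:K|=|N:K_1|$ force $H,K\in\operatorname{Hall}_\fX(J)$; and Lemma~\ref{lem:separable}(i) then supplies the conjugacy of $H$ and $K$ in~$J$.

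The main obstacle is precisely this promotion from $J_1$ to $N$: small examples---e.g.\ $G=S_3$ with $\fX$ the class of $2$-groups, series $S_3\geqslant A_3\geqslant 1$, and two distinct Sylow $2$-subgroups $H,K$---show that $N$ can strictly contain $J_1$, due to cross terms $h^{-1}k\in G_1$ arising from $h\in H$, $k\in K$ lying in the same coset of $G_1$. To tame them, I would combine Lemma~\ref{lem:WHS} with a Frattini-style argument: Lemma~\ref{lem:WHS} applied to $H_1\in\smX(G_1)$ makes $N_{G_1}(H_1)/H_1$---and hence $N_N(H_1)/H_1$---an $\fX'$-group, while Lemma~\ref{lem:Frattini} (once the requisite normality of $J_1$ in $N$, or at least the use of its normal closure, is verified---which is the most delicate point) should yield $N=J_1\cdot N_N(H_1)$. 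Consequently $N/J_1$ becomes a quotient of the $\fX'$-group $N_N(H_1)/H_1$, so $N$ is an $\fX$-separable extension of $J_1$ by an $\fX'$-group with $H_1\in\operatorname{Hall}_\fX(N)$, completing the induction.
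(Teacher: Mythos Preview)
Your inductive setup and the reduction to proving that $N=J\cap G_1$ is $\fX$-separable with $H_1\in\operatorname{Hall}_\fX(N)$ are correct, and you have accurately located the obstruction: nothing forces $J_1=\langle H_1,K_1\rangle$ to be normal in $N$ (or in $J$), so Lemma~\ref{lem:Frattini} cannot be invoked as written, and passing to the normal closure of $J_1$ gives no control over $\fX$-separability. This is a genuine gap, and the proposal does not close it.

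The paper avoids this difficulty by using the inductive conjugacy of $H_1$ and $K_1$ not merely as a conclusion but as a \emph{reduction}. Since $H_1$ and $K_1$ are conjugate in $J_1\leqslant J$, one replaces $K$ by a $J$-conjugate so that $H_1=K_1=:T$. Now both $H$ and $K$ normalize $T$, hence $J\leqslant N_G(T)$; after passing to $G=HG_1$ (harmless, as $J\leqslant HG_1$) one has the normal series
\[
N_G(T)\ \trianglerighteqslant\ N_{G_1}(T)\ \trianglerighteqslant\ T\ \trianglerighteqslant\ 1,
\]
whose successive factors are an $\fX$-group (a quotient of $H$), an $\fX'$-group (Lemma~\ref{lem:WHS} applied to $T\in\smX(G_1)$), and an $\fX$-group. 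Thus $N_G(T)$---and therefore its subgroup $J$---is $\fX$-separable, and $|N_G(T):H|=|N_{G_1}(T):T|$ shows $H\in\operatorname{Hall}_\fX(J)$ directly; no Frattini argument is needed. In your own framework: after this conjugation the group $J_1$ collapses to $T=H_1$, which \emph{is} normal in $N$ (indeed in all of $J$), and $N/T\leqslant N_{G_1}(T)/T$ is an $\fX'$-group---so your intended argument would go through once this reduction is made.
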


\begin{proof}
Induction on $n$. For $n=1$, we have $H=K$ and everything is proved.

Let $n>1$. By Lemma~\ref{lem:inductive}, we have $H_1=H\cap G_1\in\smX(G_1)$ and $ H_1\equiv K_1\pmod {*{{ 1}}}$. By the induction hypothesis, $H_1$ and $K_1$ are conjugate in $\langle H_1, K_1\rangle\leqslant J$, therefore, without loss of generality, we may assume that $H_1=K_1$. In addition, the projections $H^1$ and $K^1$ coincide, so the equalities $HG_1=KG_1=JG_1$ hold, and we may also assume that $G=HG_1=KG_1=JG_1$.

For brevity, we set $T=H_1=K_1$. Now $H,K\leqslant N_G(T)$, whence $J\leqslant N_G(T)$. Note that $N_{G_1}(T)=G_1\cap N_G(T)$ is a normal subgroup of $N_G(T)=HN_{G_1}(T)$. The factor group $N_G(T)/N_{G_1}(T)\cong H/(H\cap N_{G_1}(T))$ is an $\fX$-group. On the other hand, $T\in\smX(G_1)$ and, by Lemma~\ref{lem:WHS}, the factor $N_{G_1}(T)/T$ is an $\fX'$-group. Thus, all factors of the normal series
$$
N_G(T)\trianglerighteqslant N_{G_1}(T)\trianglerighteqslant T\trianglerighteqslant 1
$$
of $N_G(T)$ lie either in $\fX$ or~$\fX'$, so this group is $\fX$-separable. By Lemma~\ref{lem:separable}(ii), its subgroup $J$ is also $\fX$-separable. The subgroup $H$, being an $\fX$-submaximal subgroup of the $\fX$-separable group~$J$, is $\fX$-Hall in it. Since $H\equiv K\pmod *$, the same is true for the subgroup $K$. Now $H$ and $K$ are conjugate in $J$ by virtue of Lemma~\ref{lem:separable}.
\end{proof}

\textbf{Remark.} Note that the proof of Proposition~\ref{conj} is very close to the proof of Corollary~1 in~\cite{RSV}, the refinement of which is Theorem~\ref{th:conj} of this paper. The reason why we present it here in details is that it reflects our approach to proving embedding theorems below.\medskip

We now give examples showing that the answer to Wielandt's problem in Section~\ref{sec1} is negative.
\smallskip

\begin{ex}\label{ex:1}
Let $G=S_{n}$, where $n\geqslant 5$, be the symmetric permutation group on the set $\Omega=\{1,2,\dots,n\}$. Suppose that the class $\fX$ consists of all finite groups whose nonabelian composition factors have orders less than $n!/2$. It is clear that in this case every maximal $\fX$-subgroup of $G$ is simply a maximal subgroup distinct from the alternating group $A_n$, and vice versa. As such a maximal subgroup of $H$ in $G$, we take the point stabilizer of $n\in\Omega$, isomorphic to $S_{n-1}$, and consider the normal series
$$G=G_0> G_1>G_2=1,$$
where $G_1=A_{n}.$ We indicate $\fX$-subgroups $K$ and $K_* $ such that $H\equiv K\pmod *$, $K_*\leqslant H$, but $K$ is not conjugate to any subgroup from~$H$.

To this end, consider the stabilizer $H_*$ of the set $\{n-1, n\}$, which is isomorphic to $S_2\times S_{n-2}$. Denote by $K_*$ the intersection of $H$ and~$H_*$, obviously isomorphic to~$S_{n-2}$. Let also $K$ be the subgroup consisting of those elements in~$H_*$ that induce even permutations on $\Omega\setminus\{n-1, n\}$. Clearly, $K\cong A_{n-2}\times S_2$. It is easy to check that the projections of $K$ and $K_*$ onto the factors of the given series are the same. On the other hand, $K_*\leqslant H$, while $K$ is not conjugate to any subgroup of~$H$, since $K$ contains elements acting on $\Omega$ without fixed points.
\end{ex}

In the above example, the subgroups $K$ and $K_*$ are not isomorphic. It is not hard to give an example of isomorphic $\fX$-subgroups congruent modulo the given series, one is contained in the maximal $\fX$-subgroup $H$, and the other is not conjugate to any subgroup of~$H$.

\begin{ex}\label{ex:2}
In Example~\ref{ex:1}, set $n=2m$, where $m\geqslant 3$ is odd. As before, $H$ is the point stabilizer of~$n\in\Omega$. Let
$$T_*=\langle(1,2)\rangle\leqslant H\quad\text{and}\quad T=\langle(1,2)\dots(2m-1,2m)\rangle.$$
Note that $T$ acts semiregularly on $\Omega$, and hence is not conjugate to any subgroup of~$H$. Meanwhile, the projections of $T$ and $T^*$ onto sections of the same normal series coincide, since any nontrivial permutation in them is odd.
\end{ex}

Recall that the normalizer $N_{\,\,G}^{(*)}(H)$ of a subgroup of $H$ modulo series~$(*)$ is the set of elements $x\in G$ such that
\begin{itemize}
 \item[$(N1)$] stabilize the series~$(*)$, i.\,e. $G_i^x=G_i$ for all $i$, and
 \item[$(N2)$] $
 {H\equiv H^x\pmod *}.
 $
\end{itemize}

If the series~$(*)$ is normal, then the condition $(N1)$ in the definition of the normalizer is satisfied automatically.
In the general case, the set of elements stabilizing the series $(*)$ coincides with $G^{(*)}=G^{(*n)}$, where for every $i=1,\dots,n$, we refer to
$$G^{(*i)}:=\bigcap_{j=i}^{n-1}N_G(G_j)$$
as the stabilizer of the series~$(*i)$. We also define the {\em normalizer} (in $G$) {\em of $H_i$ modulo the series~$(*i)$} as the set
$$N_{\,\,\,G}^{(*i)}(H_i)=\{x\in G^{(*i)}\mid H_i^x\equiv H_i \pmod {*i}\}.$$

The following properties of these normalizers are easily verified.

 \begin{lem}\label{properties} Let $G$ be a finite group with a series~$(*)$ and $H\leqslant G$.
 \begin{itemize}
  \item[(i)]$N_{\,\,G}^{(*)}(H)$ and $N_{\,\,\,G}^{(*i)}(H_i)$ are subgroups of~$G$.
   \item[(ii)] If  $H\equiv K\pmod * $, then $N_{\,\,G}^{(*)}(H)=N_{\,\,G}^{(*)}(K);$ if $H_i\equiv K_i\pmod {*i} $, then $N_{\,\,G}^{(*i)}(H_i)=N_{\,\,G}^{(*i)}(K_i)$.
    \item[(iii)] If $\Delta=\{K\leqslant G\mid K\equiv H\pmod *\} $, then $N_{\,\,G}^{(*)}(H)$ acts by conjugation on~$\Delta;$ if $\Delta_i=\{K_i\leqslant G_i\mid K_i\equiv H_i\pmod *\}$, then $N_{\,\,\,G}^{(*i)}(H_i)$ acts by conjugation on~$\Delta_i$.
    \item[(iv)] $N_{\,\,G}^{(*)}(H)= N_{\,\,\,G}^{(*0)}(H_0)\leqslant N_{\,\,\,G}^{(*1)}(H_1)\leqslant \dots \leqslant  N_{\,\,\,G}^{(*n)}(H_{n})=G$.
    \item[(v)] If the series~$(*)$ is normal, then $H\leqslant N_{G}(H)\leqslant N_{\,\,G}^{(*)}(H)$.
 \end{itemize}
 \end{lem}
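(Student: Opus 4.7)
The plan is to check each of~(i)--(v) directly from the definitions, using just two tools already laid out in the text: first, that $\equiv\pmod*$ is an equivalence relation preserved by conjugation by any element that stabilizes the series (the observation immediately following the definition of $\equiv\pmod*$); and second, that $G^{(*i)}=\bigcap_{j=i}^{n-1}N_G(G_j)$ is an intersection of ordinary normalizers, hence automatically a subgroup of~$G$. Both observations apply equally well to any truncated series~$(*i)$.

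For part~(i), given $x,y\in N_G^{(*i)}(H_i)$, the product $xy$ lies in $G^{(*i)}$ because the latter is a subgroup; then conjugating $H_i^y\equiv H_i$ by $x$ (which preserves $\equiv\pmod{*i}$ since $x$ stabilizes $(*i)$) and using transitivity gives $H_i^{xy}\equiv H_i^x\equiv H_i$, and the inverse case is analogous. Parts~(ii) and~(iii) rest on exactly the same principle: if $H\equiv K\pmod*$ and $x\in G^{(*)}$, then $H^x\equiv K^x$, so $H^x\equiv H$ if and only if $K^x\equiv K$, which gives~(ii); for~(iii), any $K\in\Delta$ and $x\in N_G^{(*)}(H)$ satisfy $K^x\equiv H^x\equiv H$, so $K^x\in\Delta$.

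For part~(iv), the first equality is tautological ($H_0=H$ and $(*0)=(*)$), and the last holds because $H_n=1$ and $(*n)$ imposes no conditions. The consecutive inclusions $N_G^{(*i)}(H_i)\leqslant N_G^{(*(i+1))}(H_{i+1})$ form the only step with any content: $G^{(*i)}\leqslant G^{(*(i+1))}$ by definition, and the required congruence follows from the observation that for every $j\geq i+2$ one has $H_{i+1}\cap G_{j-1}=H\cap G_{j-1}=H_i\cap G_{j-1}$, so the projection equalities demanded by $H_{i+1}^x\equiv H_{i+1}\pmod{*(i+1)}$ are already supplied by those of $H_i^x\equiv H_i\pmod{*i}$. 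Part~(v) is immediate: in a normal series every $x\in G$ stabilizes~$(*)$, so $G^{(*)}=G$; and any $x\in N_G(H)$ satisfies $H^x=H$, which trivially implies $H\equiv H^x$.

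The only real obstacle, and it is a minor one, is keeping the parallel indexings $H_i$ versus $H^i$, $(*)$ versus $(*i)$, and $G_i$ versus $G^{(*i)}$ straight---particularly in part~(iv)---to avoid off-by-one slips. Beyond this bookkeeping, no group-theoretic difficulty arises, which matches the authors' assertion that the properties ``are easily verified.''
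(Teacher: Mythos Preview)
Your proposal is correct; the paper itself offers no proof beyond the remark that the properties ``are easily verified,'' so your explicit verification is precisely what the authors intend, and there is no alternative approach to compare against. One tiny notational slip in part~(i): conjugating $H_i^y\equiv H_i$ by $x$ yields $H_i^{yx}\equiv H_i^x$ (since $(H_i^y)^x=H_i^{yx}$), not $H_i^{xy}\equiv H_i^x$; to reach $H_i^{xy}\equiv H_i$ you should instead conjugate $H_i^x\equiv H_i$ by~$y$, but this is of course immaterial to the conclusion.
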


Note that in the case of a subnormal series $(*)$, the normalizer $N_{\,\,G}^{(*)}(H)$ of a subgroup $H$ modulo~$(*)$ need not include~$H$.
\smallskip

\begin{proof}[Proof of Theorem~{\rm\ref{thm:Wielandt's}}]
Let $G$ have a normal series $(*)$ of length $n$, $H\in\smX(G)$, $\Delta=\{K\leqslant G \mid K\equiv H\pmod *\}$, and $W=\langle\Delta\rangle$.
We prove Theorem~\ref{thm:Wielandt's} by induction on~$n$.

If $n=1$, then $\Delta=\{H\}$ and $W=H\in\mathfrak{X}$, i.e., everything is proven.

Let $n>1$. Consider the series
\begin{equation*}
 \label{ser1}
  G_1\geqslant \dots\geqslant G_n=1.\eqno{(*1)}
\end{equation*} of length $n-1$. Let
$$\Delta_1=\{K\cap G_1\mid K\in \Delta\}\quad\text{and}\quad\Gamma=\{L\leqslant G_1\mid L\equiv H\cap G_1\pmod{*1}\}.$$ We claim that

\begin{itemize}
  \item[(a)] $V=\langle\Gamma\rangle$ is $\mathfrak{X}$-separable, $l_{\mathfrak{X}}(V)\leqslant n-1$, and $H_1=H\cap G_1\in\operatorname{Hall}_{\mathfrak{X}}(V)$;
  \item[(b)] if $K\in\Delta$, then $K\leqslant N_G(V)$;
  \item[(c)] $\Delta_1=\Gamma$.
\end{itemize}

Since $H_1\in\smX(G_1)$ by Lemma~\ref{lem:inductive}, claim (a) holds in view of the inductive hypothesis.

Let $K\in\Delta$. Since the series $(*)$ is normal, Lemma~\ref{properties}(v) implies that $K\leqslant N_{\,\,G}^{(*)}(K) $. Further, by items~(ii) and~(iv) of the same lemma, for every $L\in\Gamma$,
  $$
   N_{\,\,G}^{(*)}(K)=N_{\,\,G}^{(*)}(H)\leqslant N_{\,\,\,G}^{( *1)}(H_1)=N_{\,\,\,G}^{(*1)}(L).
  $$
Applying Lemma~\ref{properties}(iii), we conclude that $K$ acts on~$\Gamma$, so claim~(b) holds.

It is clear that $\Delta_1\subseteq\Gamma$. Let us prove the reverse inclusion. Take $L\in\Gamma$. Since the subgroup $V$ is $\fX$-separable and any subgroup of $\Delta$ normalizes $V$ by claim~(b), the subgroup $U=HV$ is also $\fX$-separable. Consequently, the normalizer $N_{U}(L)$ is $\fX$-separable. Note that $L\in \operatorname{Hall}_{\mathfrak{X}}(V)$. Let $K\in\operatorname{Hall}_{\mathfrak{X}}(N_{U}(L))$. Then ${L=K\cap V}$. According to the Frattini argument (Lemma~\ref{lem:Frattini}) $U=VN_{U}(L)$. Now it is easy to see that $K\in \operatorname{Hall}_{\mathfrak{X}}(U)$, so $KV=U=HV$. This yields $KG_1=HG_1$, and since $K_1=K\cap G_1=L\equiv H_1\pmod {*1}$, we have $K\equiv H\pmod *$. Thus, $L=K_1\in\Delta_1$, and claim (c) is proved.

Put now $W_1=W\cap G_1$ and consider the series
 $$
 W\trianglerighteqslant  W_1 \trianglerighteqslant V\trianglerighteqslant  1.
 $$
It is normal by claim~(b). Since $HG_1=KG_1$ holds for all $K\in\Delta$, we have $WG_1=HG_1$. Hence $W/W_1\cong WG_1/G_1$ is an $\mathfrak{X}$-group. Further, the Frattini argument yields $W_1=VN_{W_1}(H_1)$, so $W_1/V\cong N_{W_1}(H_1)/N_{V}(H_1)$. Since $N_{W_1}(H_1)\leqslant N_{G_1}(H_1)$ and $H_1\leqslant N_ {V}(H_1)$, the factor group $W_1/V$ is isomorphic to a section of the group $N_{G_1}(H_1)/H_1$, which, in turn, is an $\mathfrak{X}'$-group due to Lemma~\ref{lem:WHS}. Finally, claims (a)--(c) imply that $V$ is a normal $\fX$-separable subgroup of~$W$, $l_{\fX}(V)\leqslant n-1$, and $H_1\in\operatorname{Hall}_{\fX}(V)$. Thus, $l_{\mathfrak{X}}(W)\leqslant n$ and $H\in\operatorname{Hall}_{\mathfrak{X}}(W)$, as required.
 \end{proof}

Theorem~\ref{thm:Wielandt's} plays a key role in this paper, so it is worth noting that the idea to consider the subgroup $W$ defined in this theorem came to the authors while studying Wielandt's diaries~\cite{WieDay}.

\begin{proof}[Proof of Corollary~{\rm\ref{cor:ForNormSeries}}]
Let $\Delta=\{K\leqslant G\mid K\equiv H\pmod *\} $ and $W=\langle\Delta\rangle$. As follows from items (ii), (iii) and~(v) of Lemma~\ref{properties}, the subgroup $W$ is contained and normal in $N_{\,\,G}^{( *)}(H)$. By Theorem~\ref{thm:Wielandt's}, the subgroup $W$ is $\mathfrak{X}$-separable, ${l_{\mathfrak{X}}(W)\leqslant n}$, and ${H\in\operatorname{Hall}_{\mathfrak{X}}(W)}$. Thus, it suffices to show that $N_{\,\,G}^{(*)}(H)/W$ is an $\mathfrak{X}'$-group. This follows from the Frattini argument and Lemma~\ref{lem:WHS}.
\end{proof}

Note that Theorem~\ref{InclusionForSubnormSeries} does not follow from Corollary~\ref{cor:ForNormSeries}, since by the hypothesis of this theorem, the series~$(*)$ is subnormal (and need not to be normal, as in the hypothesis of the corollary). However, using induction and Theorem~\ref{thm:Wielandt's}, we are able to prove the following proposition and then Theorem~\ref{InclusionForSubnormSeries}.

\begin{prop}\label{ForSubnormSeries}
Let $G$ be a finite group with a subnormal series $(*)$ and $K$ an $\mathfrak{X}$-subgroup of~$G$. Take $i\in\{0,1,\dots n\}$ and suppose that $H_i\in\operatorname{sm}_{\mathfrak{X}}(G_i)$ and $K\leqslant N_{\,\,\,G}^{(*i)}(H_i)$. Then there exists an element $x\in G_i\cap\langle H_i,K \rangle$ such that $K\leqslant N_G(H_i^x)$ and $H_i\equiv H_i^x\pmod{*i}$.
\end{prop}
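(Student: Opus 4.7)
The plan is to argue by downward induction on $i$. The base case $i=n$ is immediate: $G_n=1$ forces $H_n=1$, so $x=1$ works. For the inductive step from $i+1$ to $i$, the idea is to first peel off the bottom of the series using the induction hypothesis, and then to finish by applying Theorem~\ref{thm:Wielandt's} to the two-term normal series $G_i\geqslant G_{i+1}\geqslant 1$ inside $G_i$, combined with a Frattini-style conjugacy argument.

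For the reduction step, I would set $T=H_i\cap G_{i+1}$, which is $\fX$-submaximal in $G_{i+1}$ by Lemma~\ref{lem:inductive}, and verify $K\leqslant N_{\,\,\,G}^{(*(i+1))}(T)$: the stabilizer inclusion comes from Lemma~\ref{properties}(iv), while the congruence $T\equiv T^k\pmod{*(i+1)}$ is inherited from $H_i\equiv H_i^k\pmod{*i}$ because $G_{j-1}\leqslant G_{i+1}$ for $j\geqslant i+2$ forces the projections of $T$ and of $H_i$ onto $G^j$ to coincide. The induction hypothesis applied to $T$ then supplies $y\in G_{i+1}\cap\langle T,K\rangle$ with $K\leqslant N_G(T^y)$ and $T\equiv T^y\pmod{*(i+1)}$. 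Replacing $H_i$ by $H_i^y$ (which remains in $\smX(G_i)$ by inner-automorphism conjugation, is congruent to $H_i$ modulo $(*i)$ because the $G^{i+1}$-projection is unchanged by $y\in G_{i+1}$ and the higher projections are unchanged by $T\equiv T^y$), I may henceforth assume that $K$ normalizes $T=H_i\cap G_{i+1}$ outright, while still having $K\leqslant N_{\,\,G}^{(*i)}(H_i)$ by Lemma~\ref{properties}(ii).

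Now I would apply Theorem~\ref{thm:Wielandt's} to $G_i$ with the normal series $G_i\geqslant G_{i+1}\geqslant 1$ and the submaximal subgroup $H_i$. This yields the $\fX$-separable subgroup $W=\langle\Delta\rangle\leqslant G_i$ with $H_i\in\operatorname{Hall}_{\fX}(W)$, where $\Delta=\{L\leqslant G_i\mid LG_{i+1}=H_iG_{i+1},\ L\cap G_{i+1}=T\}$. To force the conjugator to lie inside $\langle H_i,K\rangle$, I would pass from $\Delta$ to the $J$-invariant subfamily $\Delta'=\{L\in\Delta\mid L\leqslant J\}$, where $J=\langle H_i,K\rangle$, and set $W'=\langle\Delta'\rangle$. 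The hypothesis $K\leqslant N_{\,\,G}^{(*i)}(H_i)$ gives $H_i^kG_{i+1}=H_iG_{i+1}$, and $K$ also normalizes $T$, so both $K$ and $H_i$ stabilize $\Delta'$, whence $W'\trianglelefteq J$. Since $H_i\leqslant W'\leqslant W$, the index $[W':H_i]$ divides the $\fX'$-number $[W:H_i]$, so $H_i$ remains $\fX$-Hall in the $\fX$-separable subgroup $W'$.

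At this point I would run the Frattini argument via Lemmas~\ref{lem:separable} and~\ref{lem:Hall} inside the $\fX$-separable group $KW'$: choose $H_*\in\operatorname{Hall}_{\fX}(KW')$ containing~$K$, observe $H_*\cap W'\in\operatorname{Hall}_{\fX}(W')$, and use conjugacy of $\fX$-Halls inside~$W'$ to write $H_*\cap W'=H_i^z$ for some $z\in W'\leqslant G_i\cap\langle H_i,K\rangle$. Then $K\leqslant H_*$ normalizes $H_i^z$. The congruence $H_i^z\equiv H_i\pmod{*i}$ then follows from Corollary~\ref{cor:ForNormSeries} applied to the two-term series: $z\in W\leqslant N_{\,\,\,G_i}^{(G_i\geqslant G_{i+1}\geqslant 1)}(H_i)$ yields $H_i^zG_{i+1}=H_iG_{i+1}$ and $H_i^z\cap G_{i+1}=T$, and for $j\geqslant i+2$ the inclusion $G_{j-1}\leqslant G_{i+1}$ forces $H_i^z\cap G_{j-1}=T\cap G_{j-1}=H_i\cap G_{j-1}$, so the remaining projections match. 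Taking $x=yz$ completes the induction. The main obstacle I expect is the transition from $W$ to $W'$: Theorem~\ref{thm:Wielandt's} naturally supplies the larger subgroup, but the proposition demands a conjugator in $\langle H_i,K\rangle$, and one must verify that the $J$-invariant trimming $W'$ is still $\fX$-separable with $H_i$ as $\fX$-Hall before Frattini can be applied.
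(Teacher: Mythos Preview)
Your proof is correct and follows essentially the same route as the paper: downward induction on $i$, reduction via the inductive hypothesis to the case $K\leqslant N_G(H_{i+1})$, and then Theorem~\ref{thm:Wielandt's} applied to the two-term normal series $G_i\trianglerighteqslant G_{i+1}\trianglerighteqslant 1$ combined with a Frattini-type conjugacy. The only cosmetic difference is how you force the conjugator into $\langle H_i,K\rangle$: you pass to the $J$-invariant subfamily $\Delta'$ and work inside $W'=\langle\Delta'\rangle$, whereas the paper simply observes that $J=\langle H_i,K\rangle\leqslant KW$ is itself $\fX$-separable, takes a Hall $\fX$-subgroup of $J$ containing $K$, and finds the conjugator in $J\cap W$, bypassing the auxiliary $W'$.
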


\begin{proof} We argue by induction on $m=n-i$.

If $m=0$, then $i=n$ and $H_i=G_i=1$, so $x=1$ fits.

Let $m>0$. Then, by Lemma~\ref{lem:inductive}, $$H_{i+1}=H_i\cap G_{i+1}\in \operatorname{sm}_{\mathfrak{X}}(G_{i+1})$$ and, by Lemma~\ref{properties}(iv), $$K\leqslant N_{\,\,\,G}^{(*i)}(H_i)\leqslant N_{\,\,\,G}^{(*(i+1))}(H_{i+1}).$$ By the inductive hypothesis, there exists an element  $y\in{G_{i+1}\cap\langle H_{i+1},K \rangle}$
  such that
  $K\leqslant N_G(H_{i+1}^y)$  
  and
  $H_{i+1}^y\equiv H_{i+1}\pmod{{*(i+1)}}.$
 Since $y\in G_{i+1}$, it follows that $H_i^yG_{i+1}= H_iG_{i+1}$, so $$H_i^y\equiv H_i\pmod {*i}.$$
 By Lemma~\ref{properties}(ii), $K\leqslant N_{\,\,\,G}^{(*i)}(H_i)= N_{\,\,\,G}^{(*i)}(H_i^y)$, where $y\in G_{i+1}\cap\langle H_{i+1},K \rangle\leqslant G_i\cap\langle H_i,K \rangle.$
Therefore, without loss of generality we may replace $H_i$ with $H_i^y$ and assume that $$K\leqslant N_G(H_{i+1}).$$
Then $K$ normalizes $H_i$ modulo the normal series
\begin{equation*}
G_i\trianglerighteqslant  G_{i+1}\trianglerighteqslant 1 \eqno{(**)}
\end{equation*}
of the group $G_i$. Moreover, if
$$\Delta=\{L\leqslant G_i\mid L\equiv H_i\pmod {{**}}\},$$ then $K$ normalizes the subgroup $W=\langle\Delta\rangle\leqslant G_i$ due to Lemma~\ref{properties}(iii).

Since the series $(**)$ is normal, Theorem~\ref{thm:Wielandt's} implies that the subgroup $W$ is $\mathfrak{X}$-separable and $H_i\in\operatorname{Hall}_{\mathfrak{X}}(W)$. The subgroup $KW$ is also $\mathfrak{X}$-separable, because $K$ is an $\fX$-group. By Lemma~\ref{lem:separable}(ii), the subgroup $J=\langle H_i, K\rangle\leqslant KW$ is $\mathfrak{X}$-separable too.

Take a Hall $\fX$-subgroup $T$ of $J$ that includes~$K$. Consider the subgroup $U=T\cap W$. Since  $J\cap W$ is a normal subgroup in $J$, it follows that $U\in \operatorname{Hall}_{\mathfrak{X}}(J\cap W)$ in view of Lemma~\ref{lem:Hall}.  On the other hand, it is clear that $H_i\in \operatorname{Hall}_{\mathfrak{X}}(J\cap W)$. By Lemma~\ref{lem:separable}, there is an element $x\in J\cap W\leqslant G_i$ such that $U=H_i^x$. Since $K\leqslant T$, we have $K\leqslant N_G(U)=N_G(H_i^x)$, as required.

It remains to show that $U\equiv H_i\pmod {*i}$. Since $H_i$ is an $\fX$-Hall subgroup in both $J\cap W$ and $W$, it follows that $\operatorname{Hall}_{\mathfrak{X}}(J\cap W)\subseteq \operatorname{Hall}_{\mathfrak{X}}(W)$ and $U\in \operatorname{Hall}_{\mathfrak{X}}(W)$. Therefore $H_iG_{i+1}=WG_{i+1}=UG_{i+1}$, so $H^{(i)}=U^{(i)}$. Both $H_i$ and $K$ normalize $H_{i+1}$, whence $H_{i+1} \trianglelefteqslant J$ and $H_{i+1}\leqslant T$. The inclusions $H_{i+1}\leqslant H_i\leqslant W$ yield $H_{i+1}\leqslant T\cap W=U$, so $U\cap G_{i+1}=H_{i+1}$. Thus $U\equiv H_i\pmod{*i}$, and we are done.
\end{proof}

\begin{proof}[Proof of Theorem~{\rm\ref{InclusionForSubnormSeries}}] Let $(*)$ be  a subnormal series of $G$. Assume that $H\in\smX(G)$ and $K\leqslant N_{\,\,G}^{(*)}(H)$ for an~$\fX$-subgroup $K$ of~$G$. By Proposition~\ref{ForSubnormSeries} for $i=0$, there exists $x\in \langle H,K\rangle$ such that $K\leqslant N_G(H^x)$. Since $H^x\in \smX(G)$, Lemma~\ref{lem:WHS} implies that $K\leqslant H^x$ and $K$ is conjugate to the subgroup $K^{x^{-1}}$ of~$H$.
\end{proof}

\textbf{Remark.} In fact, the subgroup $\langle H,K\rangle$ in Theorem~\ref{InclusionForSubnormSeries} is $\mathfrak{X}$-separable. Indeed, as in the proof of this theorem, there exists $x\in\langle H,K\rangle$ such that $K\leqslant H^x$ and $H^x\equiv H\pmod *$. Now Proposition~\ref{conj} implies that the group $\langle H,H^x\rangle$ and its subgroup $\langle H,K\rangle$ are $\mathfrak{X}$-separable.


\begin{thebibliography}{10}


\bibitem{Chun}
Chunikhin, S.A.: Subgroups of finite groups. Wolters-Noordhoff Publishing, Groningen (1969)

\bibitem{GR_surv}
Guo, W., Revin, D.O.: Pronormality and submaximal $\mathfrak{X}$-subgroups in finite groups. Commun. Math. Stat., 6:3, 289--317 (2018).
https://doi.org/10.1007/s40304-018-0154-9

\bibitem{HH} Hall, P., Higman, G.: On the $p$-length of $p$-solvable group and reduction theorem for Burnside's problem. Proc. London Math. Soc. (3), 6, 1--42 (1956).
https://doi.org/10.1112/plms/s3-6.1.1

\bibitem{RSV}
Revin, D.O., Skresanov, S.V., Vasil'ev, A.V.: The Wielandt--Hartley theorem for submaximal $\mathfrak{X}$-subgroups. Monatsh. Math., 193:1, 143--155 (2020).
https://doi.org/10.1007/s00605-020-01425-4

\bibitem{Wie4}
Wielandt, H.: Zusammengesetzte Gruppen endlicher Ordnung, Lecture notes, Math. Inst. Univ. T\"ubingen (1963/64). In: Wielandt, H. Mathematische Werke. Mathematical Works
Vol.~1 (Group theory), pp. 607--655. De Gruyter, Berlin (1994)

\bibitem{Wie3}
Wielandt, H.: Zusammengesetzte Gruppen: H\"older Programm heute. In: The Santa Cruz conf. on finite groups, Santa Cruz (1979). Proc. Sympos. Pure Math., Vol. 37, pp. 161--173. Amer. Math. Soc., Providence RI (1980)

\bibitem{WieDay}
Wielandt, H.: Die mathematischen Tageb\"{u}cher.
https://www3.math.tu-berlin.de/numerik/Wielandt/






\end{thebibliography}
\end{document}